\newcommand{\bitem}{\begin{itemize}}
\newcommand{\eitem}{\end{itemize}}
\newcommand{\bapp}{\begin{application}}
\newcommand{\eapp}{\end{application}}
\newcommand{\bapps}{\begin{applications}}
\newcommand{\eapps}{\end{applications}}
\newcommand{\bdefi}{\begin{definition}}
\newcommand{\edefi}{\end{definition}}
\newcommand{\beq}{\begin{equation}}
\newcommand{\eeq}{\end{equation}}
\def\bpm{\begin{pmatrix}}
\def\epm{\end{pmatrix}}
\newcommand{\bcas}{\begin{cases}}
\newcommand{\ecas}{\end{cases}}
\newcommand{\bex}{\begin{exemp}}
\newcommand{\eex}{\end{exemp}}
\newcommand{\bexs}{\begin{exemps}}
\newcommand{\eexs}{\end{exemps}}
\newcommand{\beqna}{\begin{eqnarray}}
\newcommand{\eeqna}{\end{eqnarray}}
\newcommand{\beqnas}{\begin{eqnarray*}}
\newcommand{\eeqnas}{\end{eqnarray*}}
\definecolor{green}{rgb}{0,.7,.2}
\definecolor{orange}{rgb}{0.9,.5,0}
\newcommand{\LL}{{\rm L}}
\def\tr{\textmd{trace}\,}
\def\det{{ \rm{det}}}  
\def\Id{{\rm{Id}}} 
\def\cA{{\mathcal A }}
\def\cM{{\mathcal M }}
\def\cP{{\mathcal P }}
\def\bbC{{\mathbb{C}}}
\def\bbH{{\mathbb{H}}}
\def\bbO{{\mathbb{O}}}
\newcommand{\bbR}{{\mathbb {R}}}
\newcommand{\bbS}{{\mathbb {S}}} 
\begin{document}

\title*{Dyson processes on the octonion algebra}
\author{Songzi Li\inst{1}}
\institute{Institut de Math\'ematiques de Toulouse, Universit\'e Paul Sabatier,
118 route de Narbonne, 31062 Toulouse, France;
School of Mathematical Sciences, Fudan University, 220 Handan Road, 200433 Shanghai, China;
\texttt{songzi.li@math.univ-toulouse.fr}
}

\maketitle

%
%
\abstract{We consider Brownian motion on  symmetric matrices of octonions, and study the law of the spectrum. Due to the fact that the octonion algebra is  nonassociative, the dimension of the matrices plays a special role. We provide two specific models on octonions, which give some indication of the relation between the multiplicity of eigenvalues and the exponent in the law of the spectrum.}

\section{Introduction}

The study of the laws of the spectrum is one of the most important topics in random matrix theory. One may consider stochastic diffusion  processes on specific set of matrices, for example symmetric or Hermitian matrices.   Usually one  considers the empirical measure of the spectrum, which is often again a stochastic diffusion process,  called a Dyson process,  see the works of Wigner \cite{Wig58}, Mehta \cite{Meh}, Dyson \cite{dyson}, Anderson-Guionnet-Zeitouni \cite{AndGuioZeit}, Erd\"os and co \cite{erdosandco3, erdosandco2, erdosandco1} , Forrester \cite{Forrester} and references therein.

Let us recall some classical results on this topic. Consider specific matrices with  independent Gaussian elements  : real symmetric ($\beta = 1$), Hermitian ($\beta = 2$) and real quaternionic ($\beta = 4$). Then the law of their eigenvalues $(\lambda_{i})_{1\leq i \leq n}$, ordered as $\lambda_{1} \leq ... \leq \lambda_{n}$, has a density with respect to the Lebesgue measure $d\lambda_{1}...d\lambda_{n}$ which is  \beq\label{factor} C_{\beta, n}{\rm exp}(-\frac{\beta}{2}\sum^{n}_{j=1}\lambda^{2}_{j})\prod_{1 \leq j \leq k \leq n}|\lambda_{k} - \lambda_{j}|^{\beta},\eeq where $C_{\beta, n}$ are constants depending on $\beta$, $n$.

On the other hand, one can also consider random matrices with stochastic process as entries. In his paper~\cite{dyson}, Dyson derived the stochastic equations of the eigenvalues of Hermitian matrices whose elements are independent complex Brownian motions (see also in this direction   Anderson-Guionnet-Zeitouni \cite{AndGuioZeit}, Mehta \cite{Meh}, Li-Li-Xie \cite{LLX}). The stochastic process on the spectrum provides a dynamic way to study the law of the eigenvalues of the matrix with Gaussian entries. In fact there are two ways: one is through the law of the eigenvalues of matrices with Brownian motions as entries, considered at  time $t = 1$; the other one is through the matrix whose elements are Ornstein-Uhlenbeck process, since when $t \rightarrow \infty$ the law of the matrix converges to a matrix with  Gaussian entries, and the law of its spectrum is invariant through O-U process: in this case, the law of the spectrum may be seen as the invariant (in fact reversible, see definition \ref{symmopt}) law of the process. This is in general a much easier way to identify the law, since reversible measures are easy to identify through the knowledge of the generator.

Meanwhile, if we consider real symmetric matrix, Hermitian matrix and real quaternionics matrix  as real ones of dimension respectively $n \times n$, $2n \times 2n$, $4n \times 4n$, the multiplicity of the eigenvalues is again 1, 2 and 4. This fact leads us to wonder whether this exponent  factor in the density reflects the multiplicity of the eigenvalues. However, this is  not true.

In a recent paper, Bakry and Zani \cite{BakZ}, the authors considered real symmetric matrices whose  elements are independent Brownian motions depending on  some associative algebra structure of the Clifford type. Their computation of the law of the spectrum shows that, even though there is still the term $\prod_{1 \leq j \leq k \leq n}|\lambda_{k} - \lambda_{j}|^{\beta }$ with $\beta = 1, 2, 4$, the factor $\beta$ here reflects the structure of the algebra, known as Bott periodicity,  rather than the dimension of the eigenspaces, which in this situation may be as large as we want.

The previous study on Dyson Brownian motion, including the work of Bakry and Zani \cite{BakZ} on Clifford algebra, mainly concentrated on the case where  the underlying algebra is associative. It is therefore worth understanding how important this property is in the study of the related Dyson processes. The octonion algebra, which is nonassociative but only alternative, provides a good example for us to start with. Its structure differs from the Clifford one, although Clifford algebras with 1 or 2 generators coincide with complex numbers  and quaternions, the Clifford algebra with 3 generators does not coincide with octonions, even if the algebras have the same real dimension 8. In his book \cite{Forrester}, section $1.3.5$, Forrester mentions that the distribution~\eqref{factor} with $\beta = 8$ can be realized by $2 \times 2$ matrices on octonions, with Gaussian entries.  It is therefore worth to look at the associated Dyson process, which could also provide this result through the study of its reversible measure.

There are only four normed division algebras: $\bbR$, $\bbC$, $\bbH$ and $\bbO$. We are familiar with $\bbR$, $\bbC$, and while  the quaternion algebra  $\bbH$ is noncommutative but associative,the octonion algebra $\bbO$  is  nonassociative,  but only alternative. Even though their properties are not so nice, octonions have some important connections to different fields of mathematics, such as geometry, topology and algebra. One interesting example is its role in the classification of simple Lie algebra. There are 3 infinite families of simple Lie algebras, coming from the isometry groups of the projective spaces $\mathbb{RP}^{n}$, $\mathbb{CP}^{n}$ and $\mathbb{HP}^{n}$. The remaining 5 simple Lie algebras were later discovered to be in connection with octonions: they come from the isometry groups of the projective planes over $\bbO$, $\bbO \otimes \bbC$, $\bbO \otimes \bbH$, $\bbO \otimes \bbO$ and the automorphism group of octonions. It is also worth to mention that, according to the independent work by Kervaire~\cite{Kerv} and Bott-Milnor~\cite{BottMilnor} in 1958, there are only 4 parallelizable spheres: $\bbS^{0}$, $\bbS^{1}$, $\bbS^{3}$ and $\bbS^{7}$, which correspond precisely to elements of unit norm in the normed division algebras of the real numbers, complex numbers, quaternions, and octonions. See more examples in the paper by J. Baez \cite{BaezJ}.

For the eigenvalue problem of matrices on octonions, Y.G.Tian proved in his paper~\cite{tian} that $2 \times 2$ Hermitian matrix on octonions has 2 eigenvalues, each of them has multiplicity 8. For $3 \times 3$ Hermitian octonionic matrix, Dray-Manogue~\cite{drayM} and Okubo~\cite{oku} showed that it has 6 eigenvalues with multiplicity 4. For $4 \times 4$ and $5 \times 5$  Hermitian octonionic matrices, there are only numerical results, indicating that the eigenvalues have multiplicity 2~\cite{tian}. It is still unknown for matrices in higher dimension. Following the analysis of Bakry and Zani ~\cite{BakZ}, one may expect that  the study of probabilistic models on matrices of octonions could  give new insights in these directions.

In this paper, we consider Brownian motions on symmetric matrices of octonions. Due to the fact that octonions are nonassociative, and in contrast with the Clifford case, the dimension of the matrices plays a specific role. In fact, contrary to the real, complex and quaternionic cases, octonions do not give rise to infinite series of Lie groups but only specific ones, which are closely related to dimension 2. Thus the study of Dyson processes is mainly pertinent in this dimension, although we introduce another probabilistic model related to the octonion algebra, but with a special structure, see Section 4.2. To study the law of the spectrum of the matrices, we consider the processes on the characteristic polynomials $P(X)$, as introduced in the paper by Bakry and Zani \cite{BakZ}. Because of the specific structure of octonions, the traditional way to compute the law of the spectrum turns out to be quite hard,  while computation on the process of $P(X)$ provides a simpler and more efficient method to see things clearly.

The paper is organized as follows. Section 2 gives an introduction to the basics of the octonion algebra; Section 3 explains briefly the language and tools of symmetric diffusion process; Section 4 states  our main results, two specific models on octonions, and then we explain what is so special about dimension 2; Section 5 is devoted to the demonstration of the connection between the algebra structure and the Euclidean structure associated with the associated symmetric matrices, and the fact that the the  two exponents,  multiplicity of eigenvalues and  exponent in the law of eigenvalues,  are not correlated.

\section{The octonion algebra}

In this section, we recall some facts about the octonion algebra,  and we refer to~\cite{BaezJ} for more details. We start with a few definitions.
\bdefi
An algebra A is a division algebra if for any $a, b \in A$, with $ab = 0$, then either $a = 0$ or $b = 0$. A normed division algebra is a division algebra that is also a normed vector space with $\| ab\|= \|a\| \|b \|$.
\edefi

\bdefi
An algebra $A$ is alternative if the subalgebra generated by any two elements is associative. By a theorem of  Artin~\cite{scha}, this is equivalent to the fact that for any $a, b \in A$, $(aa)b = a(ab), \ \ \ (ba)a = b(aa)$.
\edefi

As mentioned earlier, there are only four normed division algebras, $\bbR$, $\bbC$, $\bbH$, $\bbO$. There is a nice way called "Cayley-Dickson construction" to produce this sequence of algebras: the complex number $a + ib$ can be seen as a pair of real numbers $(a,b)$; the quaternions can be defined as a pair of complex number; and similarly the octonions is a pair of quaternions. As the construction proceeds, the property of the algebra becomes worse and worse: the quaternions are noncommutative but associative,  while the octonions are only alternative but not associative.

Since octonions and Clifford algebra are both the algebra with  dimension $2^{n}$ (in this case $n=3$),  which  share some special property, we can use the presentation provided in Bakry and Zani \cite{BakZ}  to describe the algebra structure on a basis of octonions, in order to simplify the computations. This presentation is not classical, and we shall therefore use the table below.

Define $E = \{1, 2, 3\}$, and let $ \mathcal{P}(E)$ denote the set of   the subsets of $E$. For every set $A \in \mathcal{P}(E)$,we associate a  basis element  $\omega_{A}$ in the octonion algebra, with  $\omega_{\emptyset} = \Id$, the identity element. Then an element $x \in \bbO$ can be written in the form
$$
x = \sum_{A}x_{A}\omega_{A}, ~x_A\in \bbR,
$$
and the product of two elements $x$ and $y$ is given by
$$
x y = \sum_{A, B}x_{A}y_{B}\omega_{A}\omega_{B}.
$$
It remains to define  $\omega_{A}\omega_{B}$ for given $A, B \in  \mathcal{P}(E)$, which is given through the following rule: denote by $A.B$ the symmetric difference $A \cup B\setminus(A \cap B)$, then $\omega_{A}\omega_{B} = (A|B)\omega_{A.B}$, where $(A|B)$ takes value in $\{ -1, 1\}$.  Then, the multiplication rule in the octonion algebra is defined by a sign table,  which is as follows :
\\
$$
\begin{tabular}{|c|c|c|c|c|c|c|c|c|}
  \hline
                & $\emptyset$ & $\{1\}$ & $\{2\}$ & $\{3\}$ & $\{1,2\}$ & $\{1, 3\}$ & $\{2, 3\}$ & $\{1, 2, 3\}$ \\
  \hline
  $\emptyset$  & 1 & 1 & 1 & 1 & 1 & 1 & 1 & 1 \\\hline
  $\{1\}$      & 1 & -1 & 1 & 1 & -1 & -1 & 1 & -1 \\\hline
  $\{2\}$      & 1 & -1 & -1 & 1 & 1 & -1 & -1 & 1\\\hline
  $\{3\}$      & 1 & -1 & -1 & -1 & 1 & 1 & 1 & -1\\\hline
  $\{1,2\}$    & 1 & 1 & -1 & -1 & -1 & -1 & 1 & 1\\\hline
  $\{1, 3\}$   & 1 & 1 & 1 & -1 & 1 & -1 & -1 & -1\\\hline
  $\{2, 3\}$   & 1 & -1 & 1 & -1 & -1 & 1 & -1 & 1\\\hline
  $\{1, 2, 3\}$& 1 & 1 & -1 & 1 & -1 & 1 & -1 & -1\\
  \hline
\end{tabular}
$$
\\
In this table, the element $(i, j)$ is the sign $(A_i|A_j)$, where $A_i$ is the $i$th element in the first column, $A_j$ is the $j$th element in the first row.

From the facts that for  $A, B \neq \emptyset$, $\omega^{2}_{A} = -1$ and $\omega_{A}\omega_{B} = -\omega_{B}\omega_{A}$, it is easy to get the following rules:
\begin{eqnarray*}
(A|A) &=& \left\{
            \begin{array}{ll}
              -1, & \hbox{$A \neq \emptyset$,}   \\
              1, & \hbox{$A = \emptyset$, }
            \end{array}
          \right.\\
(A|B) &=& -(B|A), \ \ \  for \ \ \ B \neq A, A,B\neq \emptyset.\\
\end{eqnarray*}

It can be seen from the above table that $\bbO$ is an algebra, non-associative but alternative. Moreover  $\bbO$ can be equipped with the Euclidean structure obtained by identifying $\bbO$ as a 8 dimensional (real) vector space via
\[x =\sum_A x_A \omega_A \mapsto (x_\emptyset, x_{\{1\}}, x_{\{2\}}, x_{\{3\}}, x_{\{1,2\}}, x_{\{1,3\}}, x_{\{2,3\}}, x_{\{1,2,3\}})\,,\]
so that the inner product  and the norm are respectively :
\[\langle x, y\rangle = \sum_A x_A y_A \  \ , \ \   \Vert x\Vert = (\sum_A x_A^2)^{1/2}\,,\]
so that $\{\omega_A,~ A\in \cP(E)\}$ form a real  orthonormal basis for the algebra $\bbO$.

Let us recall that to prove that  $\bbO$ is a division algebra, it is usual to  introduce the conjugate
\[x = \sum_A x_A \omega_A \mapsto x^* = \sum_A x_A \omega_A (A |A),\]
and observe that $(xy)^* = y^* x^*$ , $xx^* = x^*x$ and $\Vert x\Vert^2 = xx^*$,
so that $\Vert xy\Vert^2 = (xy)(xy)^*= (xy)(y^*x^*) = x(yy^*)x^* = \Vert x\Vert^2 \Vert y\Vert^2$.

Altough the previous table does not provide an associative algebra, the octonion algebra satisfies however some useful identities. In what follows, we shall make a strong use of  Moufang identities, which are stated as follows :  for elements $x, y, z$ belongs to $\bbO$, we have
\beqnas
z(x(zy)) &=& (zxz)y,\\
((xz)y)z &=& x(zyz),\\
(zx)(yz) &=& (z(xy))z,\\
(zx)(yz) &=& z((xy)z).\\
\eeqnas

We shall mainly use this for the elements $\omega_A\in \bbO$, although Moufang identities provide more information than this.

According to the alternativity property, we can get some basic formulae about the sign table $\{(A|B)\}$ for octonions,

\begin{lemma}\label{altern}
For $A, B, C, D \in \mathcal{P}(E)$, we have
\begin{enumerate}
\item{ \label{oct.it1}
$(A.B |B) = (A|B)(B|B)$.}
\item{ \label{oct.it2}
$(A .B |A)(A.B |B) = (A.B|A.B)$.}
\item{\label{oct.it3} If $A.B \neq \emptyset$,
$(A.C |A)(B.C |B) = -(A.C |B)(B.C |A)$.}
\item{\label{oct.it4}If $A.B.C.D = \emptyset$,
$$
(B.C |C)(C.D |D)(D.A |A)(A.B |B) = (B.D |B.D).
$$}
\end{enumerate}
\end{lemma}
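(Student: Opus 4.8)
The plan is to derive all four identities from the single structural fact that the multiplication $\omega_A\omega_B=(A|B)\,\omega_{A.B}$ is associative whenever it is restricted to the subalgebra generated by two elements — this is exactly the alternativity of $\bbO$ via Artin's theorem — together with the two elementary rules already recorded, namely $(A|A)=-1$ for $A\neq\emptyset$ (and $=1$ for $A=\emptyset$) and $(A|B)=-(B|A)$ for $A\neq B$ both nonempty. I would first note that the group $(\cP(E),.)$ is abelian, so $A.B=B.A$, and that associativity of any triple product among basis elements lying in an associative subalgebra gives $(A|B)(A.B|C)=(B|C)(A|B.C)$ whenever $\{\omega_A,\omega_B,\omega_C\}$ generate an associative subalgebra. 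The subtlety is that two octonion basis elements always generate an associative subalgebra (by Artin), so $(\omega_A\omega_B)\omega_B=\omega_A(\omega_B\omega_B)$ and $(\omega_A\omega_A)\omega_B=\omega_A(\omega_A\omega_B)$ hold unconditionally; this is what powers items \ref{oct.it1} and \ref{oct.it2}. For three genuinely distinct generators associativity can fail, and that is precisely why the hypotheses $A.B\neq\emptyset$ in \ref{oct.it3} and $A.B.C.D=\emptyset$ in \ref{oct.it4} appear — they force the relevant triple into a two-generated, hence associative, situation.

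For item \ref{oct.it1}, I would compute $(\omega_A\omega_B)\omega_B$ two ways: directly it is $(A|B)(A.B|B)\,\omega_{A.B.B}=(A|B)(A.B|B)\,\omega_A$, while by alternativity it equals $\omega_A(\omega_B\omega_B)=(B|B)\,\omega_A$; comparing coefficients gives $(A.B|B)=(A|B)(B|B)$. Item \ref{oct.it2} follows by applying \ref{oct.it1} twice: $(A.B|A)=(A.B|A.B.A)$... more cleanly, write $(A.B|A)(A.B|B)$ and use \ref{oct.it1} with the roles $((A.B).A\,|\,A)$ — since $(A.B).B=A$ and $(A.B).A=B$ — to replace each factor, together with $(B|A)(A|B)=1$ when $A,B\neq\emptyset$, $A\neq B$, and handle the degenerate cases $A=\emptyset$, $B=\emptyset$, $A=B$ separately; the bookkeeping collapses to $(A.B|A.B)$. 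These two are routine sign-chasing once the alternativity identity is in hand.

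The main obstacle is item \ref{oct.it4}, and to a lesser extent \ref{oct.it3}. For \ref{oct.it3}, the hypothesis $A.B\neq\emptyset$ means $A\neq B$; I would express each of the four signs $(A.C|A)$, $(B.C|B)$, $(A.C|B)$, $(B.C|A)$ using \ref{oct.it1} and the skew-symmetry rule to pull out a common factor, reducing the claimed identity to $(C|C)=-1$, i.e.\ to $C\neq\emptyset$ — which must itself be checked, or the case $C=\emptyset$ dealt with directly (when $C=\emptyset$ both sides are $\pm1$ and equality is immediate). For \ref{oct.it4}, the trick is that $A.B.C.D=\emptyset$ means $D=A.B.C$, so the four "edge" symmetric differences $B.C$, $C.D$, $D.A$, $A.B$ all lie in the subgroup generated by $A.B$ and $B.C$ (note $C.D=C.(A.B.C)=A.B$ shifted... in fact $C.D=A.B$ and $D.A=B.C$), collapsing the product of four signs into an expression involving only two "independent" set-classes, at which point \ref{oct.it1}, \ref{oct.it2} and the abelian-ness of $.$ finish it, the right-hand side $(B.D|B.D)$ emerging because $B.D=A.C$ is the "diagonal". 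I expect the real work to be tracking signs through these substitutions and treating the boundary cases where one or more of the sets involved is empty; the conceptual content is entirely that alternativity $=$ associativity on two generators, and everything else is disciplined computation with the rules $(A|A)=\pm1$ and $(A|B)=-(B|A)$.
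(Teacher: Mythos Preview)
Your plan for items~\ref{oct.it1} and~\ref{oct.it2} is fine and matches the paper. The gap is in items~\ref{oct.it3} and~\ref{oct.it4}, and it stems from a misreading of the hypotheses. You write that the conditions $A.B\neq\emptyset$ and $A.B.C.D=\emptyset$ ``force the relevant triple into a two-generated, hence associative, situation''. They do not: $A.B\neq\emptyset$ says only $A\neq B$, and $A.B.C.D=\emptyset$ says $D=A.B.C$, which places $\omega_D$ in the subalgebra generated by $\omega_A,\omega_B,\omega_C$ --- three generators, generically non-associative. Consequently your reduction strategy, which uses only item~\ref{oct.it1} and the skew rule $(A|B)=-(B|A)$, cannot close. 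Concretely: item~\ref{oct.it1} rewrites $(A.C|A)=(C|A)(A|A)$ and $(B.C|B)=(C|B)(B|B)$, but gives no handle on $(A.C|B)$ or $(B.C|A)$, since those are of the form $(X.Y|Z)$ with $Z\notin\{X,Y,X.Y\}$; the promised collapse to ``$(C|C)=-1$'' never materialises. Likewise in item~\ref{oct.it4}, your substitutions $C.D=A.B$, $D.A=B.C$ are correct and reduce $\Theta$ (via item~\ref{oct.it1}) to $(A|B)(B|C)(C|D)(D|A)\cdot(A|A)(B|B)(C|C)(D|D)$, but this $4$-cycle of signs is not computable from items~\ref{oct.it1}--\ref{oct.it2} alone.

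What is actually needed is a genuinely three-element identity. The paper supplies two different ones: for item~\ref{oct.it3} it uses that $\bbO$ is a \emph{normed} division algebra, so left multiplication by a unit vector is orthogonal; taking $x=\omega_C+\omega_{A.B.C}$ and expanding $\langle x\omega_A,\, x\omega_B\rangle=0$ yields exactly the desired sign relation. For item~\ref{oct.it4} it uses the Moufang identities (which do follow from alternativity, but are three-variable, not two) to evaluate $((\omega_{B.C}\omega_C)(\omega_{D.A}\omega_A))((\omega_{C.D}\omega_D)(\omega_{A.B}\omega_B))$ two ways. If you want to stay closer to your own framing, a workable substitute for item~\ref{oct.it3} is the alternating-associator identity $[\omega_A,\omega_B,\omega_C]=-[\omega_A,\omega_C,\omega_B]$, which for $B\neq C$ nonempty gives $(A|B)(A.B|C)=-(A|C)(A.C|B)$; specialising $A\to C$, $B\to A$, $C\to B$ and combining with item~\ref{oct.it1} does yield item~\ref{oct.it3}. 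But that step --- invoking an alternating associator or Moufang --- is the missing idea, and your proposal as written does not contain it.
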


\begin{proof}
The first one is just the result of alternativity:\\
$$
(\omega_{A}\omega_{B})\omega_{B} = (A|B)\omega_{A.B}\omega_{B} = (A.B |B)\omega_{A},
$$
while
$$
(\omega_{A}\omega_{B})\omega_{B} = \omega_{A}(\omega_{B}\omega_{B}) = (B|B)\omega_{A},
$$
Hence $$(A.B |B) = (A|B)(B|B).$$
The second one can be easily proved by the first statement.

For the third statement, we first remark that,  for any vector $x = \sum x_{C}\omega_{C}$, $x\omega_{A}$ is always orthogonal to $x\omega_{B}$ if $A \neq B$. Indeed, to see this,  we may reduce to the case where $\|x\|=1$, and then observe that the fact that the algebra is a division algebra shows that for any $y \in \bbO$, $y\mapsto xy$ is an orthogonal transformation. For $A, B \in \cP(E)$, $A.B \neq \emptyset$, and any $C \in \cP(E)$, choose $D = A.B.C$ and set $x = \omega_C + \omega_D$. Then
\beqnas
0 &=& \langle x\omega_{A}, x\omega_{B} \rangle\\
&=& \langle (C|A)\omega_{C.A} + (D|A)\omega_{D.A}, (C|B)\omega_{C.B} + (D|B)\omega_{D.B} \rangle\\
&=&  \langle (C|A)\omega_{C.A}, (D|B)\omega_{D.B} \rangle + \langle (D|A)\omega_{D.A}, (C|B)\omega_{C.B} \rangle\\
&=&  (C|A)(D|B) + (D|A)(C|B).\\
\eeqnas
Since $D = A.B.C$, the above formula indicates that for $A, B, C \in \cP(E)$, $A.B \neq \emptyset$,
$$
(C|A)(A.B.C|B) + (A.B.C|A)(C|B) =0 .
$$
By changing $C$ into $A.C$, we get
$$
(A.C|A)(B.C|B) + (B.C|A)(A.C|B)=0,
$$
then the third statement is proved.

For the last statement, denote
\beq\label{theta}
\Theta:=(B.C |C)(C.D |D)(D.A |A)(A.B |B).
\eeq Then,  from Moufang identities,
\begin{eqnarray*}
((\omega_{B.C}\omega_C)(\omega_{D.A}\omega_A))((\omega_{C.D}\omega_D)(\omega_{A.B}\omega_B)) &=& (B.C |C)(C.D |D)(D.A |A)(A.B |B)(\omega_B\omega_D)(\omega_C\omega_A)\\
&=& \Theta(B|D)(C|A)\omega_{B.D}\omega_{C.A}\\
&=& \Theta(B|D)(C|A)\omega_{B.D}\omega_{B.D}\\
&=& \Theta(B|D)(C|A)(B.D|B.D).\\
\end{eqnarray*}
On the other hand,
\begin{eqnarray*}
\omega_A\omega_B = (A.B|A.B)(A|A)(B|B)\omega_B\omega_A.\\
\end{eqnarray*}
\begin{eqnarray*}
& &((\omega_{B.C}\omega_C)(\omega_{D.A}\omega_A))((\omega_{C.D}\omega_D)(\omega_{A.B}\omega_B))\\
&=& (D|D)(B.C|B.C)(A|A)(A|A)(B|B)(A.B|A.B)((\omega_{B.C}\omega_C)(\omega_A\omega_{B.C}))((\omega_{A.B}\omega_D)(\omega_B\omega_{A.B}))\\
&=& (D|D)(B.C|B.C)(B|B)(A.B|A.B)(\omega_{B.C}(\omega_C\omega_A)\omega_{B.C})(\omega_{A.B}(\omega_D\omega_B)\omega_{A.B})\\
&=& (D|D)(B.C|B.C)(B|B)(A.B|A.B)(C|A)(D|B)(\omega_{B.C}\omega_{C.A}\omega_{B.C})(\omega_{A.B}\omega_{D.B}\omega_{A.B})\\
&=& (D|D)(B|B)(C|A)(D|B)\omega_{C.A}\omega_{D.B}\\
&=& (D|D)(B|B)(C|A)(D|B)(B.D|B.D).\\
\end{eqnarray*}
Hence,
\begin{eqnarray*}
\Theta &=& (B|D)(C|A)(B.D|B.D)(D|D)(B|B)(C|A)(D|B)(B.D|B.D)\\
&=& (B|D)(D|D)(B|B)(D|B)\\
&=& (B.D|B.D),\\
\end{eqnarray*}
which ends the proof of the lemma.
\end{proof}

For a $n \times n$ matrix on octonions, write it as $\mathcal{M} = \sum_{A}M^{A}\omega_{A}$, where $\{M^{A}\}$ are real  $n \times n$ matrices. For  an $n$ dimensional  vector $\sum_{B}X^{B}\omega_{B}$,
\begin{eqnarray*}
(\sum_{A}M^{A}\omega_{A})(\sum X^{B}\omega_{B}) = \sum_{A, B}M^{A}X^{B}(A|B)\omega_{A.B} = \sum_{A, B}(A.B|B)M^{A.B}X^{B}\omega_{A}.
\end{eqnarray*}
Therefore, $\mathcal{M}$ can be expressed by the real  $8n\times 8n$ block matrix $\{M^{A, B}_{ij}\}$, where $M^{A, B}_{ij} = (A.B|B)M_{ij}^{A.B}$.

This leads to the following definition:

\bdefi\label{defi.matrix.octonions} A $(2^3\times n)\times (2^3\times n)$ block matrix $M^{A,B}$ (where $A, B\subset\{1,2,3\}$) is a real octonionic if  $M^{A,B}= (A.B|B) M^{A.B}$, where $M^{A}= M^{A, \emptyset}$ is a family of 8  $n\times n$ square matrices. It is the real form of a matrix with octonionic entries. We shall denote it as $\cM= \sum_A  M^A\omega_A$.

\edefi

Then, we shall say that an  octonionic matrix is symmetric if its real form is symmetric. This corresponds to the fact that, for any $A\in \cP(E)$, $(M^{A})^{t} = (A|A)M^{A}$.

That is to say, $(M^{A, B})^{t} =(A.B|B) (M^{A.B})^{t} = M^{B, A} = (B.A|A)M^{A.B}$. Due to property 2 of Lemma 2.3, this leads to the fact  that for any $A\in \cP(E)$, $(M^{A})^{t} = (A|A)M^{A}$, i.e. $M^\emptyset$ is symmetric while $M^A$ is antisymmetric for any $A\neq \emptyset$.

It is worth to point out that since the octonion algebra is   not associative, there is no matrix presentation of the algebra structure for  the octonions, and therefore the matrix multiplication of the real octonionic matrices does not corresponds to the octonionic  multiplication of the associated matrices with octonion entries. Even the product of octonionic matrices is not octonionic in general.

The inverse of an octonionic matrix is in general not octonionic, and  its exact structure is not easy to decipher; the octonionic property may not be preserved. The following lemma gives a condition for this last property to hold, and will play an important role in the rest of this paper.

\begin{lemma}\label{lemma.invert}
Let $M = \sum M^{A}\omega_{A}$ be an octonionic matrix  such that $M^\emptyset $ is invertible. Assume moreover that,  for any $A, B \in \mathcal{P}(E)$
\beq\label{symm}
M^{A}(M^{\emptyset})^{-1}M^{B} = M^{B}(M^{\emptyset})^{-1}M^{A},
\eeq and that
$\sum_{C}M^{C}(M^{\emptyset})^{-1}M^{C}$ is invertible.
Then, $M$ is invertible and its inverse $N$ is octonionic,
satisfying $N= \sum_A \omega_A N^A$, with
\beq\label{eqN1}
N^{A} = -N^{\emptyset}M^{A}(M^{\emptyset})^{-1}, \ \ for A \neq \emptyset,\\
\eeq
\beq\label{eqN2}
N^{\emptyset} = (\sum_{C}M^{C}(M^{\emptyset})^{-1}M^{C})^{-1}.\\
\eeq
\end{lemma}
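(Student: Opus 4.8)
The plan is to prove the sharper statement $MN=\Id$, where $N$ is the octonionic matrix defined by \eqref{eqN1}--\eqref{eqN2}; since $M$ and $N$ are $8n\times 8n$ real block matrices, $NM=\Id$ then follows automatically, so $M$ is invertible with inverse $N$. First I would normalize. Put $G:=M^\emptyset$ and $T^A:=M^AG^{-1}$, so $T^\emptyset=\Id$. Hypothesis \eqref{symm} is exactly the assertion that $\{T^A\}_{A\in\cP(E)}$ is a commuting family, and since $\sum_C M^C(M^\emptyset)^{-1}M^C=\bigl(\sum_C (T^C)^2\bigr)G=:QG$, the second hypothesis says that $Q:=\sum_C(T^C)^2$ is invertible; being a polynomial in the commuting $T^A$, $Q$ commutes with each of them. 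Right-multiplying the real form of $\cM$ by the scalar octonionic matrix $\cD$ with $\cD^\emptyset=(M^\emptyset)^{-1}$ (real form $\Id_8\otimes (M^\emptyset)^{-1}$) yields the real form of $\sum_A T^A\omega_A$, which is octonionic with $\emptyset$-block $\Id$; since multiplication by a scalar octonionic matrix on one side preserves the octonionic form and $\cM^{-1}=\cD\bigl(\sum_A T^A\omega_A\bigr)^{-1}$, a direct check turns \eqref{eqN1}--\eqref{eqN2} into their counterparts for the normalized matrix. We may therefore assume $M^\emptyset=\Id$, $M^A=T^A$; the claim becomes that $\sum_A T^A\omega_A$ has octonionic inverse $Q^{-1}\Id-\sum_{A\neq\emptyset}Q^{-1}T^A\omega_A$.

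Next I would reduce to a ``norm identity''. Introduce $M':=\Id-\sum_{A\neq\emptyset}T^A\omega_A$, the entrywise octonion conjugate of $\sum_A T^A\omega_A$, whose $A$-block equals $(A|A)T^A$. Because $Q^{-1}$ is a scalar matrix commuting with all $T^A$, the real form of $\Id_8\otimes Q^{-1}$ commutes with the real form of $\sum_A T^A\omega_A$, so the candidate inverse is $(\Id_8\otimes Q^{-1})\,M'$ and it suffices to show
\[
\Bigl(\sum_A T^A\omega_A\Bigr)M'=\Id_8\otimes Q,
\]
i.e. that this real $8n\times 8n$ matrix is block-diagonal with each diagonal $n\times n$ block equal to $Q$. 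Writing both factors in real block form via $M^{A,B}=(A.B|B)M^{A.B}$ and $(M')^{A,B}=(A.B|B)(A.B|A.B)T^{A.B}$ and substituting $D=A.C$ in the block product, the $(A,B)$-block becomes
\[
\sum_{D\in\cP(E)}(D\,|\,A.D)\,(A.D.B\,|\,B)\,(A.D.B\,|\,A.D.B)\;T^D T^{A.D.B},
\]
and the goal is to show this equals $\delta_{AB}\,Q$.

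For the diagonal blocks ($B=A$) this collapses to $\sum_D (D|A.D)(D|A)(D|D)(T^D)^2$; the term $D=\emptyset$ is $\Id$, and for $D\neq\emptyset$ one uses $(D|D)=-1$ together with the elementary relation $(D\,|\,A.D)(D\,|\,A)=-1$, which follows from parts \ref{oct.it1}--\ref{oct.it2} of Lemma \ref{altern} (the cases $A=\emptyset$ and $A=D$ being checked directly), so the term is $(T^D)^2$ and the sum is $\Id+\sum_{D\neq\emptyset}(T^D)^2=Q$. For the off-diagonal blocks, assume $A\neq B$, set $E_0:=A.B\neq\emptyset$, and group the eight summands into the four orbits $\{D,\,D.E_0\}$ of the fixed-point-free involution $D\mapsto D.E_0$. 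The orbit $\{\emptyset,E_0\}$ cancels directly (the $\emptyset$-row of the sign table is constant $1$), and for each of the other three orbits $D$ and $D.E_0$ are both nonempty, so after using $T^D T^{D.E_0}=T^{D.E_0}T^D$ the orbit cancels precisely when
\[
(D\,|\,A.D)\,(D.A.B\,|\,B)+(D.A.B\,|\,D.B)\,(D\,|\,B)=0 .
\]

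This last identity is the technical heart, and I expect the main obstacle to be pinning down the right instance of Lemma \ref{altern}. The plan is to apply part \ref{oct.it3} of Lemma \ref{altern} with the substitution $(A,B,C)\mapsto(D.B,\,A.D,\,A)$, legitimate because $(D.B).(A.D)=A.B\neq\emptyset$, which gives $(D.A.B\,|\,D.B)(D\,|\,A.D)=-(D.A.B\,|\,A.D)(D\,|\,D.B)$; feeding this into the displayed expression and using $(D\,|\,D.B)(D\,|\,B)=-1$ from the diagonal case reduces the required cancellation to $(D.A.B\,|\,B)+(D.A.B\,|\,A.D)=0$, i.e. to $(D.A.B\,|\,B)(D.A.B\,|\,A.D)=-1$; and this is part \ref{oct.it2} of Lemma \ref{altern} applied with $(A,B)\mapsto(B,\,D.A)$ combined with $(D.A.B\,|\,D.A.B)=-1$, valid since $D\neq A.B$ forces $D.A.B\neq\emptyset$. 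Once all four orbits are accounted for, every off-diagonal block vanishes, the norm identity holds, and the lemma follows.
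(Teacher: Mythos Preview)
Your proof is correct. It differs from the paper's in organization rather than in essence. The paper does not normalize: it posits an octonionic inverse $N$, writes out the block equations for $NM=\Id$, and observes (via the substitution $C\mapsto A.B.C$ together with part~\ref{oct.it3} of Lemma~\ref{altern}) that the off-diagonal equation holds as soon as the pairwise relation $\widetilde N^{A.C}M^{C.B}=\widetilde N^{B.C}M^{C.A}$ does; specializing $C=B$ then forces $\widetilde N^{D}=N^{\emptyset}M^{D}(M^{\emptyset})^{-1}$, after which hypothesis~\eqref{symm} verifies the pairwise relation for all $C$, and the diagonal equation fixes $N^{\emptyset}$. You instead reduce to $M^\emptyset=\Id$, recognize the candidate inverse as $Q^{-1}$ times the entrywise octonion conjugate $M'$, and verify the ``norm identity'' $MM'=\Id_8\otimes Q$ directly---the matrix analogue of $z\bar z=|z|^2$---by the same pairing of terms under $D\mapsto D.E_0$; your final cancellation also uses part~\ref{oct.it3} (in the guise $(A',B',C')=(D.B,A.D,A)$), supplemented by part~\ref{oct.it2}. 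So both arguments rest on the identical sign lemma; your normalization keeps the algebra cleaner and the argument fully constructive, while the paper's derivational style makes it more transparent where formulas~\eqref{eqN1}--\eqref{eqN2} come from.
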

\begin{proof}
In fact, assume the octonionic matrix $N^{A, B}= (A.B|B)N^{A.B} = (A.B|A)\widetilde{N}^{A.B}$ is the inverse of $M$, where$$N^{A.B}=(A.B|A.B)\widetilde{N}^{A.B} = \left\{
    \begin{array}{ll}
      -\widetilde{N}^{A.B}, & \hbox{$A.B \neq \emptyset$;} \\
      \widetilde{N}^{\emptyset}, & \hbox{$A.B = \emptyset$.}
    \end{array}
  \right.
$$ Then
\beq\label{eq3}
\sum_{C}(A.C|A)(C.B|B)\widetilde{N}^{A.C}M^{C.B} = 0, \ \ for A \neq B,\\
\eeq
\beq\label{eq4}
\sum_{C}(A.C|A)(C.A|A)\widetilde{N}^{A.C}M^{C.A} = \Id. \\
\eeq
Changing $C$ into $A.B.C$ in $(\ref{eq3})$, we get $\sum_{C}(B.C|A)(A.C|B)\widetilde{N}^{B.C}M^{C.A} = 0$, then it is enough to have  $$(A.C|A)(C.B|B)\widetilde{N}^{A.C}M^{C.B} + (B.C|A)(A.C|B)\widetilde{N}^{B.C}M^{C.A} = 0.$$

According to Lemma~\ref{altern}, it holds as soon as
\beq\label{eq1}
\widetilde{N}^{A.C}M^{C.B} = \widetilde{N}^{B.C}M^{C.A}.
\eeq
Choosing $C=B$ and then setting $D = A.C$, this leads to
\beq\label{eqN}
\widetilde{N}^{D} = N^{\emptyset}M^{D}(M^{\emptyset})^{-1},
\eeq
for every $D \in \cP(E)$. Now choose $C = \emptyset$ in $(\ref{eq1})$ and apply $(\ref{eqN})$ to $N^{A}$ and $N^{B}$, plugging into $(\ref{eq1})$, this reduces to
$$
N^{\emptyset}M^{A}(M^{\emptyset})^{-1}M^{B} = N^{\emptyset}M^{B}(M^{\emptyset})^{-1}M^{A}.
$$
Now $(\ref{eq4})$ is
$$
\sum_{D}\widetilde{N}^{D}M^{D} = \Id, \
$$
and using $(\ref{eqN})$, this  gives
$$
N^{\emptyset}(\sum M^{C}(M^{\emptyset})^{-1}M^{C})^{-1} = \Id,
$$
which means that $N^{\emptyset}$ is invertible, and gives its inverse, such that $(\ref{eqN2})$ holds true. Then we can use $(\ref{eqN2})$ and $(\ref{eqN})$ to get $(\ref{eqN1})$.

\end{proof}

\begin{remark}  It is worth to observe for later use that if the matrix $\cM$ on the octonions satisfies the assumptions of Lemma~\ref{lemma.invert}, then it is also the case of $\cM-X\Id$.

\end{remark}
\section{Symmetric diffusion operators on matrices}

We introduce the basics on symmetric diffusion operators, in a simplified version adapted to our case. For further details see \cite{bglbook}.

Let $E$ be an open set in $\bbR^n$, endowed with a $\sigma$-finite measure $\mu$  and let $\cA_0$ be the set of smooth compactly supported functions,  or of polynomials functions on $E$.
For any linear operator $\LL : \cA_0\mapsto \cA_0$, we define its carré du champ operator as
$$\Gamma(f,g) = \frac{1}{2}\Big( \LL(fg)-f\LL(g)-g\LL(f)\Big)\,.$$
We have the following
\bdefi \label{symmopt}
A symmetric diffusion operator is a  linear operator $\LL$: $\cA_0\oplus 1\mapsto \cA_0$, such that
\begin{enumerate}
\item $ \LL(1)=0$,
\item $\forall f,g\in \cA_0\oplus 1, ~\int f\LL(g)\, d\mu = \int g \LL (f) \, d\mu$,
\item $\forall f\in \cA_0, \Gamma(f,f) \geq 0$,
\item $\forall f=(f_1, \cdots, f_n)$, where $f_i\in \cA_0$ , $\Phi$ is a smooth function $\bbR^n\mapsto \bbR$ and $\Phi(0)=0$,
\beq\label{formulaL}
\LL(\Phi(f))= \sum_i \partial_i \Phi(f) \LL(f_i) + \sum_{i,j} \partial^2_{ij} \Phi(f) \Gamma(f_i,f_j).
\eeq
\end{enumerate}
\edefi

Consider an open set $\Omega\subset E$, and a given system of coordinates $(x^i)$, then we can write
$$\LL(f) = \sum_{ij} g^{ij} (x)\partial^2_{ij} f + \sum_i b^i(x)\partial_i f,$$ where
$$g^{ij}(x)= \Gamma(x^i, x^j), ~b^i (x)= \LL(x^i).$$

In this paper, we perform computations on the characteristic polynomial $P(X) = \det(M - XId)$ of a matrix $M$. Assume that we have some diffusion operator acting on the entries of a matrix $M$, described by the values of $\LL(M_{ij})$ and $\Gamma(M_{ij}, M_{kl})$ for any $(i,j,k,l)$. Then, we have,
\beqnas
\Gamma(\log P(X), \log P(Y)) &=& \sum_{i,j,k,l} \partial_{M_{ij}}\log(P(X))\partial_{M_{kl}}\log(P(Y))\Gamma(M_{ij}, M_{kl}),\\
\LL(\log P(X)) &=& \sum_{i,j} \partial_{M_{ij}}\log(P(X)) \LL(M_{ij}) + \sum_{i,j,k,l} \partial_{M_{ij}}\partial_{M_{kl}}\log(P(X))\Gamma(M_{ij}, M_{kl}).\\
\eeqnas
To compute $\partial_{M_{ij}}\log(P(X))$ and $\partial_{M_{ij}}\partial_{M_{kl}}\log(P(X))$ in the above formulae, we use the Lemma~6.1 in Bakry and Zani \cite{BakZ}, which we quote here without proof.

\begin{lemma}\label{det.derivative}
Let $M = (M_{ij})$ be a matrix and $M^{-1}$ be its inverse, on the set $\{\det M \neq 0\}$ we have
\begin{eqnarray*}
\partial_{M_{ij}}\log \det M &=& M^{-1}_{ji},\\
\partial_{M_{ij}}\partial_{M_{kl}}\log \det M &=& -M^{-1}_{jk}M^{-1}_{li}.\\
\end{eqnarray*}
\end{lemma}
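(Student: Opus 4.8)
The statement to prove is Lemma~\ref{det.derivative}: for an invertible matrix $M$, the first and second derivatives of $\log\det M$ with respect to the entries $M_{ij}$ are $M^{-1}_{ji}$ and $-M^{-1}_{jk}M^{-1}_{li}$ respectively. This is a classical matrix-calculus fact, but let me sketch how I'd prove it cleanly.

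The plan is to start from Jacobi's formula for the derivative of the determinant. First I would recall that expanding $\det M$ along the $i$-th row gives $\det M = \sum_j M_{ij} C_{ij}$, where $C_{ij}$ is the $(i,j)$ cofactor, and crucially $C_{ij}$ does not depend on any entry $M_{ik}$ in row $i$. Hence $\partial_{M_{ij}} \det M = C_{ij}$. Combining this with Cramer's rule, $M^{-1} = \frac{1}{\det M}\,\mathrm{adj}(M)$ with $(\mathrm{adj}\,M)_{ji} = C_{ij}$, we get $\partial_{M_{ij}} \det M = (\det M)\, M^{-1}_{ji}$, and therefore $\partial_{M_{ij}} \log\det M = M^{-1}_{ji}$ on the set $\{\det M \neq 0\}$. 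This handles the first identity.

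For the second identity I would differentiate the first one again, so the task reduces to computing $\partial_{M_{kl}} M^{-1}_{ji}$. Differentiating the identity $\sum_m M^{-1}_{jm} M_{mi} = \delta_{ji}$ with respect to $M_{kl}$ and using $\partial_{M_{kl}} M_{mi} = \delta_{mk}\delta_{il}$, one obtains $\sum_m (\partial_{M_{kl}} M^{-1}_{jm}) M_{mi} + M^{-1}_{jk}\delta_{il} = 0$; multiplying on the right by $M^{-1}$ and relabelling indices yields the standard formula $\partial_{M_{kl}} M^{-1}_{ji} = -M^{-1}_{jk} M^{-1}_{li}$. Substituting into $\partial_{M_{ij}}\partial_{M_{kl}}\log\det M = \partial_{M_{kl}}(M^{-1}_{ji})$ gives exactly $-M^{-1}_{jk}M^{-1}_{li}$, as claimed.

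There is no real obstacle here; the only points requiring a little care are the index bookkeeping in the cofactor expansion (making sure $C_{ij}$ genuinely is independent of row $i$) and the matching of indices between the adjugate and the inverse, i.e. that $(\mathrm{adj}\,M)_{ji}$ — not $(\mathrm{adj}\,M)_{ij}$ — equals the cofactor $C_{ij}$. Since the excerpt already attributes this to Lemma~6.1 of Bakry and Zani~\cite{BakZ} and quotes it without proof, one could equally just cite it; but the short argument above is self-contained and elementary, relying only on Cramer's rule and differentiation of $MM^{-1}=\Id$.
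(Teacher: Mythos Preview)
Your proof is correct and is the standard argument via Jacobi's formula and differentiation of $MM^{-1}=\Id$. The paper itself does not prove this lemma at all: it explicitly quotes it from Lemma~6.1 of Bakry and Zani~\cite{BakZ} ``without proof'', so there is no approach to compare against, and your self-contained argument is a strict addition rather than an alternative.
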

Hence,  with $M^{-1}(X) = (M - XId)^{-1}$,
\beqna\label{eq.character}
&&\label{eq.character1}\Gamma(\log P(X), \log P(Y)) = \sum_{i,j,k,l} M^{-1}(X)_{ji}M^{-1}(Y)_{lk}\Gamma(M_{ij}, M_{kl}),\\
&&\label{eq.character2}\LL(\log P(X)) = \sum_{i,j} M^{-1}_{ji}(X)\LL(M_{ij}) - \sum_{i,j,k,l} M^{-1}_{jk}(X)M^{-1}_{li}(X)\Gamma(M_{ij}, M_{kl}).
\eeqna

According to Bakry and Zani \cite{BakZ}, one can get  from $\Gamma(P(X), P(Y))$ and $\LL(P(X))$ informations  about the multiplicities of the eigenvalues,  and on  the invariant measure of the operator $\LL$ acting on $P(X)$:\\
If for some constants $\alpha_1, \alpha_2, \alpha_3$,
\beq\label{eq.gal.P}\LL(P)= \alpha_1 P''+ \alpha_2 \frac{P'^2}{P}, ~\Gamma(\log P(X), \log P(Y))=  \frac{\alpha_3}{Y-X} \Big( \frac{P'(X)}{P(X)}-\frac{P'(Y)}{P(Y)}\Big).\eeq
and if there exists   for some $a \in \bbR$, $a\neq 0$ which satisfies
\beq \label{eq.puiss}a^2(\alpha_1+\alpha_2)-a(\alpha_1+ \alpha_3) + \alpha_3=0,\eeq
Then
\begin{enumerate}
\item If $a$ is a positive integer, it is the multiplicity of the eigenvalues of $M$;
\item Write $P(X) = \prod^{n}_{i=1}(X-x_i)^a$, the invariant measure for the operator $\LL$ in the  Weyl chamber $\{ x_1 < ... <x_n\}$  is, up to a multiplicative constant, $$d\mu = (\prod_{i < j}(x_{i} - x_{j})^2)^{-\frac{a^2(\alpha_1 + \alpha_2)}{\alpha_3}}d\mu_0,$$
    where $d\mu_0$ is the Lebesgue measure.
\end{enumerate}

\section{Symmetric matrices on octonions}
Our aim is to describe the law of the spectrum of the real form of symmetric matrices on octonions. The block matrix is $\mathcal{M} = ((A.B|B)M^{A.B})_{A, B \in \cP(E)}$, satisfying $(M^{A})^{t} = (A|A)M^{A}$ from the  symmetry  assumption.

We will focus on  cases where the symmetry condition $(\ref{symm})$ of matrix $\mathcal M - X\mathrm{Id}$ is satisfied, i.e. where the matrix
$$U(X) := (\mathcal M - X \mathrm{Id})^{-1}$$ is octonionic (almost surely for the stochastic process under consideration).

Setting $$P(X) := \det (\mathcal M - X\mathrm{Id}),$$
by Lemma~\ref{det.derivative} we have
\beqna
\nonumber\Gamma(\log P(X), \log P(Y)) &=& \sum_{\substack{A,B,C, D \\ i,j, k,l}} U_{ji}^{B,A} U_{lk}^{D,C}\Gamma(M_{ij}^{A,B}, M_{kl}^{C,D})\\
&=& \label{gamma-1} \sum_{\substack{A,B,C, D \\ i,j, k,l}} (A.B|A.B) (C.D|C.D) U_{ji}^{A.B} U_{lk}^{C.D}\Gamma(M_{ij}^{A.B}, M_{kl}^{C.D})
\eeqna
(where we used property 2 of Lemma 2.3), and
\beq\label{L0}
\LL(\log P(X)) = \sum_{\substack{A,B\\ i,j}} U^{A, B}_{ji}(X)\LL(M^{A, B}_{ij}) - \sum_{\substack{A,B,C, D \\ i,j, k,l}} U^{B, C}_{jk}(X)U^{D, A}_{li}(X)\Gamma(M^{A, B}_{ij}, M^{C, D}_{kl}).\\
\eeq

For further use in both examples, we state (without proof) two preliminary lemmas. The first lemma collects some elementary facts, consequences of the definition, the symmetries and property 2 of Lemma~\ref{altern}.
\begin{lemma}\label{newlem}
For $F \in \cP(E)$, we have
\beqna
\label{L1}
\tr\!\ U(X)^F &=& \sum_i U(X)^F_{ii}\,,\\
\label{L2}
\tr \!\ U(X) &=& 8  \!\ \tr\!\ U(X)^\emptyset\,,\\
\label{L3}
\tr\!\ [U(X)^F U(Y)^F]\ &=&\sum_{ij}   U(X)^F_{ji}U(Y)^F_{ij}\,, \\
\label{L4}
 (F|F) \tr\!\ [U(X)^F U(Y)^F] &=& \sum_{ij} U(X)^F_{ji}U(Y)^F_{ji}\,, \\
\label{L5}
\tr \!\ [U(X) U(Y)]  &=& 8\sum_C (C|C) \tr\!\  [U(X)^C U(Y)^C]\,.
\eeqna
\end{lemma}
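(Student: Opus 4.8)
The plan is to read all five identities as elementary bookkeeping about the $8n\times 8n$ real form of an octonionic matrix, using only the defining relation $U(X)^{A,B}=(A.B|B)\,U(X)^{A.B}$ of Definition~\ref{defi.matrix.octonions}, the symmetry of $U(X)$, and property~\ref{oct.it2} of Lemma~\ref{altern}. Identities \eqref{L1} and \eqref{L3} are simply the definition of the trace of an $n\times n$ matrix and the two ways of writing $\tr(PQ)=\sum_{ij}P_{ij}Q_{ji}=\sum_{ij}P_{ji}Q_{ij}$, so nothing has to be proved there.

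For \eqref{L2} I would look at the diagonal blocks of the real form: since $A.A=\emptyset$ and $\omega_\emptyset\omega_A=\omega_A$ forces $(\emptyset|A)=1$, one gets $U(X)^{A,A}=(A.A|A)\,U(X)^{A.A}=(\emptyset|A)\,U(X)^{\emptyset}=U(X)^{\emptyset}$ for every $A\in\cP(E)$. Summing the $n$-dimensional traces over the eight subsets $A$ then gives $\tr U(X)=\sum_A \tr U(X)^{A,A}=8\,\tr U(X)^{\emptyset}$.

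Identities \eqref{L4} and \eqref{L5} rest on two observations. First, since $B.A=A.B$, property~\ref{oct.it2} of Lemma~\ref{altern} yields $(A.B|A)(A.B|B)=(A.B|A.B)$, hence $(A.B|B)(B.A|A)=(A.B|A.B)$. Second, $U(X)$ is octonionic and is the inverse of $\cM-X\Id$, whose real form is symmetric, so the real form of $U(X)$ is symmetric as well and therefore, as recalled before Lemma~\ref{lemma.invert}, $(U(X)^{F})^{t}=(F|F)\,U(X)^{F}$, i.e. $U(X)^{F}_{ji}=(F|F)\,U(X)^{F}_{ij}$. For \eqref{L4} I would substitute $U(Y)^{F}_{ji}=(F|F)\,U(Y)^{F}_{ij}$ in $\sum_{ij}U(X)^{F}_{ji}U(Y)^{F}_{ji}$ and invoke \eqref{L3}. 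For \eqref{L5} I would expand $\tr[U(X)U(Y)]=\sum_{A,B}\tr[U(X)^{A,B}U(Y)^{B,A}]$; by the defining relation and the sign identity the $(A,B)$ term equals $(A.B|A.B)\,\tr[U(X)^{A.B}U(Y)^{A.B}]$, and grouping the pairs $(A,B)$ by the value $C=A.B$ — each $C$ arising from exactly the eight pairs $(A,A.C)$, $A\in\cP(E)$ — gives $\tr[U(X)U(Y)]=8\sum_C (C|C)\,\tr[U(X)^{C}U(Y)^{C}]$.

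The only step deserving a word of justification is the symmetry of the real form of $U(X)$, which is what licenses the relation $(U(X)^F)^t=(F|F)U(X)^F$: this holds because the inverse of a symmetric invertible real matrix is symmetric and because, under the standing hypothesis of this section, $U(X)$ is octonionic. Apart from that the lemma is pure sign-chasing with the multiplication table and property~\ref{oct.it2} of Lemma~\ref{altern}, and I do not anticipate any real obstacle.
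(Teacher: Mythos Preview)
Your proposal is correct and follows exactly the route indicated in the paper: the lemma is stated there without proof, with the remark that the identities are ``consequences of the definition, the symmetries and property~2 of Lemma~\ref{altern}'', which is precisely the ingredients you invoke (the block relation $U^{A,B}=(A.B|B)U^{A.B}$, the relation $(U^F)^t=(F|F)U^F$, and the sign identity $(A.B|A)(A.B|B)=(A.B|A.B)$).
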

The second lemma gives expressions of the traces in terms of the characteristic polynomials. The first two identities  are obtained by derivation from
\[\log P(X) = \tr\!\ \log (\mathcal M - X\hbox{Id}) = - \tr\!\ \log U(X)\,,\]
and the third one  is a consequence of the first one and the resolvent equation.

\begin{lemma}\label{newnewlem}
\beqna
\label{P1}
\tr\!\ U(X) &=& \frac{P'(X)}{P(X)}\,,\\
\label{P2}
\tr\!\ (U(X)^2) &=&\frac{P'(X)^2}{P(X)^2}  - \frac{P''(X)}{P(X)}\,,\\
\label{P3}
\tr\!\ [U(X)U(Y)] &=& \frac{1}{Y-X}\left(\frac{P'(X)}{P(X)} - \frac{P'(Y)}{P(Y)}\right)\,.
\eeqna
\end{lemma}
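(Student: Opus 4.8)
The plan is to prove Lemma~\ref{newnewlem} by differentiating the identity
\[\log P(X) = \tr\!\ \log(\mathcal M - X\hbox{Id}) = -\tr\!\ \log U(X)\,,\]
which holds on the set where $\mathcal M - X\hbox{Id}$ is invertible. First I would establish \eqref{P1}: differentiating both sides in $X$, the left side gives $P'(X)/P(X)$; on the right side, $\frac{d}{dX}\log\det(\mathcal M - X\hbox{Id}) = \tr\!\ [(\mathcal M - X\hbox{Id})^{-1}\frac{d}{dX}(\mathcal M - X\hbox{Id})] = \tr\!\ [U(X)(-\hbox{Id})] = -\tr\!\ U(X)$, hence $\tr\!\ U(X) = -\,P'(X)/P(X)$. (One must be careful with the sign convention: here $P(X) = \det(\mathcal M - X\hbox{Id})$ and $U(X) = (\mathcal M - X\hbox{Id})^{-1}$, and $\frac{d}{dX}U(X) = U(X)^2$ since $\frac{d}{dX}(\mathcal M - X\hbox{Id})^{-1} = -U(X)\cdot(-\hbox{Id})\cdot U(X) = U(X)^2$.) So the clean way to phrase \eqref{P1} is to track the sign once and for all and note the stated form follows.

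Next, for \eqref{P2} I would differentiate \eqref{P1} once more in $X$. On one side, $\frac{d}{dX}\bigl(P'(X)/P(X)\bigr) = P''(X)/P(X) - P'(X)^2/P(X)^2$. On the other side, $\frac{d}{dX}\tr\!\ U(X) = \tr\!\ [\frac{d}{dX}U(X)] = \tr\!\ [U(X)^2]$. Equating (with the sign from \eqref{P1}) yields $\tr\!\ [U(X)^2] = P'(X)^2/P(X)^2 - P''(X)/P(X)$, which is exactly \eqref{P2}. This step is purely mechanical once \eqref{P1} is in hand.

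For \eqref{P3} I would use the resolvent identity $U(X) - U(Y) = (X-Y)\,U(X)U(Y)$, which follows from $U(X)^{-1} - U(Y)^{-1} = (\mathcal M - X\hbox{Id}) - (\mathcal M - Y\hbox{Id}) = (Y-X)\hbox{Id}$ by multiplying on the left by $U(X)$ and on the right by $U(Y)$. Taking the trace gives $\tr\!\ U(X) - \tr\!\ U(Y) = (X-Y)\,\tr\!\ [U(X)U(Y)]$, so $\tr\!\ [U(X)U(Y)] = \frac{1}{X-Y}\bigl(\tr\!\ U(X) - \tr\!\ U(Y)\bigr)$, and substituting \eqref{P1} gives the claimed $\frac{1}{Y-X}\bigl(P'(X)/P(X) - P'(Y)/P(Y)\bigr)$ after matching signs.

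The only genuine subtlety — which I would address with one sentence — is making the differentiation of $\log\det$ rigorous: one should note that $M \mapsto \log\det M$ has gradient $(M^{-1})^t$ (Lemma~\ref{det.derivative} with $i=j$ summed, which is precisely $\partial_{M_{ij}}\log\det M = M^{-1}_{ji}$), so by the chain rule $\frac{d}{dX}\log\det(\mathcal M - X\hbox{Id}) = \sum_{i,j} U(X)_{ji}\,\frac{d}{dX}(\mathcal M - X\hbox{Id})_{ij} = -\sum_i U(X)_{ii} = -\tr\!\ U(X)$, which both justifies the step and ties it directly back to Lemma~\ref{det.derivative} already quoted. Everything else is formal manipulation; there is no real obstacle, and this is why the authors state the lemma without proof.
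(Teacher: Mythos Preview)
Your approach is correct and is exactly what the paper itself sketches: differentiate $\log P(X)=\tr\log(\mathcal M-X\mathrm{Id})$ once and twice for \eqref{P1} and \eqref{P2}, and combine \eqref{P1} with the resolvent identity $U(Y)-U(X)=(Y-X)U(X)U(Y)$ for \eqref{P3}. You are also right to flag the sign: the honest computation gives $\tr U(X)=-P'(X)/P(X)$, so \eqref{P1} as printed is off by a sign, though this is harmless for the sequel since only $(\tr U)^2$ is ever used and \eqref{P2}, \eqref{P3} come out as stated.
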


\subsection{The dimension 2 case}
Consider $M = \sum M^{A}\omega_{A}$, where $\{M^{A}\}$ are matrices whose elements are independent Brownian motions. For $A \neq \emptyset$, due to symmetry of $\mathcal{M}$, $(M^{A})^{t} = (A|A)M^{A}= -M^A$. Such matrices naturally satisfy the symmetry restriction~\ref{symm} in dimension 2, since the $2 \times 2$ antisymmetric matrices are all of the form $\left(
                                 \begin{array}{cc}
                                   0 & -z \\
                                   z & 0 \\
                                 \end{array}
                               \right)$, and they are therefore all proportional to each other. This  property fails  in higher dimensions. Set
\beq\label{basis}
\Gamma(M^{A}_{ij}, M^{B}_{kl}) = \frac{1}{2}\delta_{A, B}(\delta_{ik}\delta_{jl} + (A|A)\delta_{il}\delta_{jk}), \ \ \ L(M^{A}_{ij}) = 0,
\eeq
which reflects the symmetry of the matrices. Notice that the inverse matrix $U(X)$ is also symmetric with $(U^{A})^{t} = (A|A)U^{A}$. We have the following result

\begin{proposition}For the $2 \times 2$ symmetric matrix $M = \sum M^{A}\omega_{A}$,
\beq\label{gamma}
\Gamma(\log P(X), \log P(Y)) = \frac{8}{Y - X}\left(\frac{P'(X)}{P(X)} - \frac{P'(Y)}{P(Y)}\right),
\eeq
\beq\label{L}
L(\log P(X)) = 3\left(\frac{P'(X)^2}{P(X)^2} - \frac{P''(X)}{P(X)}\right) - \frac{1}{2}\frac{P'(X)^2}{P(X)^2}.\\
\eeq
\end{proposition}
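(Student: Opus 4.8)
Here is how I would prove the Proposition.

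The plan is to feed the covariance data~\eqref{basis} into the general identities~\eqref{gamma-1} and~\eqref{L0}, collapse every index sum into traces of products of the blocks $U^F(X)$ by means of Lemma~\ref{newlem}, and then turn those traces into expressions in $P$ via Lemma~\ref{newnewlem}. All of this is carried out on the almost-sure event on which $\mathcal M-X\mathrm{Id}$ satisfies the hypotheses of Lemma~\ref{lemma.invert}; this is precisely where $n=2$ is used, through the remark following that lemma together with the fact that all $2\times2$ antisymmetric matrices are proportional, so that $U(X)$ is octonionic and symmetric. In particular $U^F(X)$ is antisymmetric, hence traceless, whenever $F\neq\emptyset$, and I will use this repeatedly.

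For~\eqref{gamma}: substituting~\eqref{basis} into~\eqref{gamma-1} forces $A.B=C.D=:F$, makes the prefactor $(A.B|A.B)(C.D|C.D)=(F|F)^2$ equal to $1$, and, using~\eqref{L3}--\eqref{L4}, combines the $\tfrac12$ and the two Kronecker terms into $(F|F)\tr[U(X)^FU(Y)^F]$. Since there are $8\times8$ quadruples $(A,B,C,D)$ with $A.B=C.D=F$, this yields $64\sum_F(F|F)\tr[U(X)^FU(Y)^F]=8\,\tr[U(X)U(Y)]$ by~\eqref{L5}, and~\eqref{P3} gives~\eqref{gamma}.

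For~\eqref{L}: the first sum in~\eqref{L0} vanishes because $L(M^{A,B}_{ij})=(A.B|B)L(M^{A.B}_{ij})=0$. In the second sum I will expand $U^{B,C}_{jk}$, $U^{D,A}_{li}$ and $\Gamma(M^{A,B}_{ij},M^{C,D}_{kl})$ through the real-form definition; inserting~\eqref{basis} again forces $A.B=C.D=:F$, and the four signs that emerge multiply to $\Theta=(B.C|C)(C.D|D)(D.A|A)(A.B|B)$, which by item~\ref{oct.it4} of Lemma~\ref{altern}---applicable because $A.B.C.D=F.F=\emptyset$---equals $(B.D|B.D)$. The same constraint gives $B.C=D.A$ and $B.D=A.C$. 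The Kronecker deltas then collapse the index sum, the $\delta_{ik}\delta_{jl}$ part producing $(B.C|B.C)\tr[(U^{B.C}(X))^2]$ via~\eqref{L4} and the $\delta_{il}\delta_{jk}$ part producing $(\tr U^{B.C}(X))^2$, so the summand becomes $-\tfrac12(B.D|B.D)\big[(B.C|B.C)\tr[(U^{B.C}(X))^2]+(F|F)(\tr U^{B.C}(X))^2\big]$.

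Finally I will reparametrise the quadruples with $A.B=C.D$ by $(A,C,F)$, with $B=A.F$ and $D=C.F$: for fixed $F$ and $G:=B.C=A.C.F$ there are exactly $8$ of them, and $(B.D|B.D)=(A.C|A.C)=(F.G|F.G)$, $(B.C|B.C)=(G|G)$. The first resulting piece is $-4\big(\sum_F(F.G|F.G)\big)\sum_G(G|G)\tr[(U^G(X))^2]$; since $\sum_F(F.G|F.G)=\sum_H(H|H)=1-7=-6$ and $\sum_G(G|G)\tr[(U^G(X))^2]=\tfrac18\tr[U(X)^2]$ by~\eqref{L5}, this equals $3\,\tr[U(X)^2]$. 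The second piece keeps only $G=\emptyset$ (using tracelessness) and gives $-4\cdot 8\,(\tr U^\emptyset(X))^2=-32\cdot\tfrac1{64}(\tr U(X))^2=-\tfrac12(\tr U(X))^2$ by~\eqref{L2}. Hence $L(\log P(X))=3\,\tr[U(X)^2]-\tfrac12(\tr U(X))^2$, and~\eqref{P1}--\eqref{P2} turn this into~\eqref{L}. The Kronecker collapses and the multiplicity counts are routine; the one genuinely delicate step is the collapse of the four-fold sign product to $(B.D|B.D)$ through the last Moufang identity encoded in Lemma~\ref{altern}, together with correctly tracking $B.C=D.A$ and $B.D=A.C$ under $A.B.C.D=\emptyset$.
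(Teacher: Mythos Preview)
Your proof is correct and follows essentially the same route as the paper: you feed~\eqref{basis} into~\eqref{gamma-1} and~\eqref{L0}, collapse the four-fold sign product in the second formula to $(B.D|B.D)$ via item~\ref{oct.it4} of Lemma~\ref{altern}, reduce the index sums to traces of the blocks $U^G$ through Lemma~\ref{newlem}, and finish with Lemma~\ref{newnewlem}. The only cosmetic difference is your parametrisation of the quadruples with $A.B=C.D$ by $(F,G)$ (with multiplicity $8$) rather than the paper's $(B,C,D)$ with $A=B.C.D$; the arithmetic is identical.
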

\begin{proof}
Proof of (\ref{gamma}) : From (\ref{gamma-1}) and (\ref{basis}) we have
\beqnas
& &\Gamma(\log P(X), \log P(Y))\\
&=& \sum_{\substack{A,B,C, D \\  AB= CD}}(A.B|A.B)(C.D|C.D)U(X)^{B.A}_{ji}U(Y)^{D.C}_{lk}\frac{1}{2}(\delta_{ik}\delta_{jl} + (A.B|A.B)\delta_{il}\delta_{jk})\\
&=& 8\sum_{F}{\rm tr}(U(X)^{F}U(Y)^{F})(F|F) \\
&=& 8{\rm tr}(U(X)U(Y)) = \frac{8}{Y - X}\left(\frac{P'(X)}{P(X)} - \frac{P'(Y)}{P(Y)}\right),\\
\eeqnas
where we applied (\ref{L4}), (\ref{L3}), (\ref{L5}) and formula 3 in Lemma~\ref{newnewlem}. This ends the proof of (\ref{gamma}).

Proof of (\ref{L}) :  In the following, we write $U$ is  for $U(X)$.  From (\ref{L0}) and (\ref{basis}), we have
\beqnas
\LL(\log  P(X)) &=& -\frac{1}{2}\sum_{B,C,D} (B.D|B.D) \sum_{ij} (U_{ji}^{B.C})^2  \\
\label{logP}
&& -\frac{1}{2} \sum_{\substack{A,B,C, D \\  A.B.C.D = \emptyset}} (B.D|B.D)(A.B|A.B) (\sum_i U_{ii}^{BC})^2.
\eeqnas
On the one hand, in view of (\ref{L4}) and (\ref{L5})
\beqnas
\sum_{B,C,D} (B.D|B.D) \sum_{ij} \left(U_{ji}^{B.C}\right)^2  = -6\!\  \tr\!\ (U^2)\,.
\eeqnas
On the other hand, since for $F \not= \emptyset$, $U^{F}$ is antisymmetric, so that $ \hbox{tr}\!\  U^{F} = 0$,
\beqnas
\displaystyle \sum_{\substack{A,B,C, D \\  A.B.C.D = \emptyset}} (B.D|B.D)(A.B|A.B) (\sum_i U_{ii}^{B.C})^2 = 8^2 \left(\tr\!\ U^\emptyset\right)^2.
\eeqnas
Combined as in (\ref{logP}), these sums give :
\begin{equation}
 L(\log P(X)) = 3 \hbox{tr}\!\ U^2 - 4\cdot 8  \left( \hbox{tr}\!\  U^{\emptyset}\right)^2 =  3 \hbox{tr}\!\ U^2 - \frac{1}{2}  \left( \hbox{tr}\!\  U\right)^2,
\end{equation}
which, in view of (\ref{P1}) and (\ref{P2}) ends the proof of (\ref{L}) and then the proof of the Proposition.
\end{proof}

\begin{remark}s
By the results of above proposition and formula $(\ref{formulaL})$, it is easy to get
\begin{eqnarray*}
L(P) =  (11-\frac{1}{2})\frac{P'(X)^2}{P(X)} - 11P''(X).
\end{eqnarray*}
Now chose  $\alpha_1 = -11$, $\alpha_2 = 11 - \frac{1}{2}$ and  $\alpha_3 = 8$ in  formula $(\ref{eq.gal.P})$ : the resulting value for $a$ is  $a = 8$.  This shows that the multiplicity of the eigenvalues is 8.\\
Assume $\rho$ is the density of the invariant measure of $L$ of the coordinates $\{x_{i}\}$ in the Weyl chamber, then according to our discussion in the previous section, we have
$$
\rho = C\prod_{i < j}(x_{i} - x_{j})^{8}.
$$
\end{remark}

\subsection{Another model in any dimension}

We now provide another set of random octonionic  matrices  for which the symmetry condition $(\ref{symm})$ is automatically satisfied.

Let $M^{\emptyset}$ be a symmetric matrix with independent Brownian motions as its entries. For all $A, B \neq \emptyset$, let $M^{A} = M^{B} = \cA$ be a random antisymmetric matrix with independent Brownian motion as its off diagonal entries. Then consider $M = M^{\emptyset}\omega_{\emptyset} + \cA\sum_{C \neq \emptyset}\omega_{C}$. This model is similar to the Hermitian case considered in Bakry and Zani~\cite{BakZ} (see Remark~\ref{Hermitian}).
Similarly to the Hermitian case, we set
\beq\label{gamma-2.1}
\Gamma(M^{\emptyset}_{ij}, M^{\emptyset}_{kl}) = \frac{1}{2}(\delta_{ik}\delta_{jl} + \delta_{il}\delta_{jk}),
\eeq
\beq\label{gamma-2.2}
\Gamma(\cA_{ij}, \cA_{kl}) = \frac{1}{14}(\delta_{ik}\delta_{jl} - \delta_{il}\delta_{jk}),
\eeq
\beq\label{L2}
\Gamma(M^{\emptyset}_{ij}, \cA_{kl}) = 0, \ \ \ \LL(M^{A}_{ij}) = 0.
\eeq
Due to Lemma~\ref{altern}, for the inverse matrix $U(X) = (M - XI)^{-1}$, we have for every $C \neq \emptyset$
$$
U^{C} = -U^{\emptyset}M^{C}(M^{\emptyset}-XI)^{-1} = -U^{\emptyset}\cA(M^{\emptyset}-XI)^{-1},
$$
which means for all $C \neq \emptyset$, $U^{C}$ is the same, and we denote it by $U_{a}$.

\begin{proposition}For the matrix $M = M^{\emptyset}\omega_{\emptyset} + \sum_{C \neq \emptyset}\cA\omega_{C}$ on the octonions,
\begin{eqnarray*}
\Gamma(\log P(X), \log P(Y))&=& \frac{8}{Y - X}\left(\frac{P'(X)}{P(X)} - \frac{P'(Y)}{P(Y)}\right),\\
L(\log P) &=& - \frac{1}{8}\frac{P'(X)^2}{P(X)^2}.\\
\end{eqnarray*}
\end{proposition}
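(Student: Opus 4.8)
The computation runs in close parallel to the dimension 2 case, the difference being that now there are effectively two "types" of blocks: the scalar block $U^\emptyset$ and the common antisymmetric block $U_a = U^C$ for $C\neq\emptyset$. I would start from the general formulas \eqref{gamma-1} and \eqref{L0}, substitute the covariance structure \eqref{gamma-2.1}--\eqref{L2}, and reduce everything to traces of products of $U^\emptyset$ and $U_a$.

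For the $\Gamma$ part: plugging \eqref{gamma-2.1}--\eqref{gamma-2.2} into \eqref{gamma-1}, the $M^\emptyset$--$M^\emptyset$ term contributes (after using \eqref{L4}, \eqref{L3} with $F=\emptyset$) a multiple of $\tr[U(X)^\emptyset U(Y)^\emptyset]$, while the $\cA$--$\cA$ terms, summed over all pairs $A.B = C\neq\emptyset$ and $C.D = C'\neq\emptyset$ with the constraint forcing $C=C'$, contribute a multiple of $\tr[U_a(X)U_a(Y)]$ with the antisymmetric sign $(C|C)=-1$ and the combinatorial factor counting the $7$ nonempty subsets. One then recognizes, exactly as in Lemma~\ref{newlem} formula \eqref{L5}, that the combination $8\sum_C (C|C)\tr[U(X)^C U(Y)^C]$ reassembles into $\tr[U(X)U(Y)]$; the normalizations $\frac12$ and $\frac1{14}$ are chosen precisely so that the scalar and antisymmetric contributions combine with the right weights. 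Finally \eqref{P3} of Lemma~\ref{newnewlem} converts $\tr[U(X)U(Y)]$ into $\frac{1}{Y-X}\big(\frac{P'(X)}{P(X)}-\frac{P'(Y)}{P(Y)}\big)$, yielding the factor $8$.

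For the $L$ part: here $\LL(M^A_{ij})=0$ kills the first sum in \eqref{L0}, so only the double sum $-\sum U^{B,C}_{jk}U^{D,A}_{li}\Gamma(M^{A,B}_{ij},M^{C,D}_{kl})$ survives. Splitting the covariance into its scalar and antisymmetric pieces, one gets a sum of traces of the forms $\tr[(U^\emptyset)^2]$, $\tr[U_a^2]$, $(\tr U^\emptyset)^2$, $(\tr U_a)^2$, and possible cross terms $\tr[U^\emptyset U_a]$ — but since $U_a$ is antisymmetric, $\tr U_a = 0$ and the cross/diagonal terms with a single $U_a$ vanish, just as in the dimension 2 argument. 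Collecting the surviving terms, using \eqref{L2}, \eqref{L5} to relate $\tr[(U^\emptyset)^2]$ and $\tr[U_a^2]$ to $\tr(U^2)$ and $(\tr U^\emptyset)^2$ to $(\tr U)^2$, and then \eqref{P1}, \eqref{P2} of Lemma~\ref{newnewlem}, the $\tr U^2$ (i.e.\ the $P''$) contributions must cancel — this is the crucial check that pins down the constant $\frac1{14}$ — leaving only a multiple of $(\tr U)^2 = P'(X)^2/P(X)^2$, which should come out to be $-\frac18$.

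**Main obstacle.** The bookkeeping of the sign factors $(A.B|A.B)$, $(C.D|C.D)$ and the constraint $A.B.C.D=\emptyset$ in the quadruple sum of \eqref{L0} is the delicate part: one must correctly count how many quadruples $(A,B,C,D)$ with $A.B, C.D$ prescribed satisfy the constraint, and track the sign $(B.D|B.D)$ in each case (using properties \ref{oct.it2} and \ref{oct.it4} of Lemma~\ref{altern}). The coefficient $\frac1{14} = \frac1{2\cdot 7}$ strongly suggests the relevant count is "$7$ nonempty subsets $C$" times the scalar normalization $\frac12$; getting the cancellation of the $P''$ term to work out is exactly the computation that confirms this, and it is where I would be most careful. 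Everything else is a mechanical reduction to Lemmas~\ref{newlem} and~\ref{newnewlem}.
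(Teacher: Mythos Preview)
Your plan matches the paper's proof closely for the $\Gamma$ computation: split according to whether $A.B$ and $C.D$ are empty or not, obtain $S_1 = 2\cdot 8^2\,\tr[U(X)^\emptyset U(Y)^\emptyset]$ and $S_2 = -2\cdot 7^2\cdot 8^2\,\tr[U(X)_a U(Y)_a]$, and combine via \eqref{L5} and \eqref{P3}. That part is fine.

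For the $L$ computation your outline has a real gap. You repeatedly invoke ``the constraint $A.B.C.D=\emptyset$'' and plan to use property~\ref{oct.it4} of Lemma~\ref{altern} to reduce the sign factor to $(B.D|B.D)$. But in this model the covariance $\Gamma(M^{A.B}_{ij},M^{C.D}_{kl})$ is nonzero whenever $A.B\neq\emptyset$ and $C.D\neq\emptyset$ --- there is \emph{no} requirement that $A.B=C.D$, unlike the first model. So in the antisymmetric piece $S'_2$ of \eqref{L0} the sign factor is the full $\Theta=(B.C|C)(C.D|D)(D.A|A)(A.B|B)$, and property~\ref{oct.it4} only simplifies it on the sub-locus $A.B.C.D=\emptyset$. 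The paper handles $S'_2$ by splitting into four cases according to whether $B=C$ and whether $A=D$: cases $B=C,A\neq D$ and $B\neq C,A=D$ vanish (by antisymmetry of $\Theta$ in a pair of indices, or equivalently by your observation that $\tr[U^\emptyset U_a]=0$), case $B=C,A=D$ uses property~\ref{oct.it4}, but the generic case $B\neq C,A\neq D$ requires evaluating $\sum_{A\neq B,\,C\neq D,\,B\neq C,\,A\neq D}\Theta = 2^3\cdot 7^2$ directly (the paper resorts to computer algebra). This is precisely the step where your stated toolkit --- properties~\ref{oct.it2} and~\ref{oct.it4} --- is insufficient, and it is the step that makes the $\tr(U^\emptyset)^2$ and $\tr(U_a)^2$ terms cancel. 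Everything else in your plan is correct; you just need a separate argument for that sign sum.
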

\begin{proof}
On the one hand, from (\ref{gamma-1}) and (\ref{L2})
\beq\label{gamma11}
\Gamma (\log P(X), \log P(Y)) = \frac{1}{2} S_1  + \frac{1}{14} S_2,
\eeq
where
\beqnas
S_1 &:=& \sum_{A=B, C=D}\sum_{i,j,k,l} U(X)^\emptyset_{ji} U(Y)^\emptyset_{lk}(\delta_{ik}\delta_{jl} + \delta_{il}\delta_{jk}),\\
S_2 &:=& \sum_{A \not=B, C\not= D} \sum_{i,j,k,l} U(X)^{A.B}_{ji} U(Y)^{C.D}_{lk}(\delta_{ik}\delta_{jl} - \delta_{il}\delta_{jk})\,.
\eeqnas
A careful computation, using the fact that  $U(Y)^\emptyset$ is symmetric and $U(Y)_a$ is antisymmetric gives
\beq\label{sprime1}
S_1 = 2\cdot 8^2\!\  \tr\!\ \left[U(X)^\emptyset U(Y)^\emptyset\right]\ , \ S_2 = - 2 \cdot 7^2 \cdot 8^2  \!\  \tr\!\ \left[U(X)_a U(Y)_a\right]\,,
\eeq
and then, using (\ref{gamma11})
\[\Gamma (\log P(X), \log P(Y)) = 8^2  \!\  \tr\!\ \left[U(X)^\emptyset U(Y)^\emptyset \right] - 7\cdot 8^2  \!\ \tr\!\ \left[U(X)_a U(Y)_a \right]. \]
Going back to (\ref{L5}) we see that
\begin{eqnarray}
\hbox{tr}\!\ \left[U(X) U(Y)\right]
&=& 8 \!\  \tr\!\ \left[U(X)^\emptyset U(Y)^\emptyset \right] - 7 \cdot 8 \!\ \tr\!\ \left[U(X)_a U(Y)_a \right],
\end{eqnarray}
so that
\[\Gamma(\log P(X) , \log P(Y)) = 8 \!\ \hbox{tr}\!\ \left[U(X) U(Y)\right] = \frac{8}{Y-X}\left(\frac{P'(X)}{P(X)}- \frac{P'(Y)}{P(Y)}\right)\,.\]

On the other hand, with $U$ for $U(X)$
\beqnas
\LL(\log P) &=& -\frac{1}{2} \displaystyle \sum_{\substack{A=B,C=D \\  i,j,k,l}}(B.C|C)(D.A|A)(A.B|B)(C.D|D)U(X)^{B.C}_{jk}U(X)^{D.A}_{li}(\delta_{ik}\delta_{jl} + \delta_{il}\delta_{jk}) \\
&& - \frac{1}{14}  \displaystyle \sum_{\substack{A\not =B,C\not=D \\  i,j,k,l}}(B.C|C)(D.A|A)(A.B|B)(C.D|D)U(X)^{B.C}_{jk}U(X)^{D.A}_{li}(\delta_{ik}\delta_{jl} - \delta_{il}\delta_{jk})\\
\label{bigL}
&=:& -\frac{1}{2} S'_1 - \frac{1}{14} S'_2\,.
\eeqnas
Let us first remark that
\beq\label{sumijkl}
\sum_{ijkl} U_{jk}^F U_{li}^G (\delta_{ik}\delta_{jl} \pm \delta_{il}\delta_{jk}) = (F|F) \tr\!\ (U^F U^G) \pm (\tr\!\ U^F) (\tr\!\ U^G).
\eeq
For the first part, we have
\beqnas
S'_1 &=& \sum_{B,C}\tr\!\ (U^{B.C})^2 + \sum_{B,C} (B.C|B.C) [\tr\!\ U^{B.C}]^2\\ \label{sprime}
&=& 8\left[\tr\!\ (U^\emptyset)^2 + 7 \tr\!\ (U_a)^2\right]  + 8  [\tr\!\ U^\emptyset]^2.
\eeqnas

For the second part, going back to the notation (\ref{theta}) for $\Theta$ and applying (\ref{sumijkl}) we have
\begin{eqnarray}
\label{sprime2}
S'_2 = \sum_{A \not= B, C \not= D} \Theta\times \left((B.C|B.C)\tr\!\ [U^{B.C}  U^{A.D}] - \tr\!\ U^{B.C} \tr\!\ U^{A.D}\right).
\end{eqnarray}
Let us split the sum into four parts according to $B=C$ or not,  and $A=D$ or not.

i) When $A \not = D , B=C$, the sum vanishes. Indeed, in this case,
\beq\label{UU}
(B.C|B.C)\tr\!\ [U^{B.C}U^{A.D}]- (\tr\!\ U^{B.C}\tr\!\ U^{A.D}) = \tr [U^\emptyset U_a],
\eeq
and $\Theta = (A.D|A)(A.B|B)(B.D|D)$ which is antisymmetric in  $A,B$.

ii) The same occurs when $A=D$ and $B \not = C$.

iii) When $A=D$ and $B=C$,
\[(B.C|B.C)\tr\!\ [U^{B.C}U^{A.D}]- (\tr\!\ U^{B.C}\tr\!\ U^{A.D})  = \tr\!\ [U^\emptyset]^2 - (\tr\!\ U^\emptyset)^2\,,\]
and $\Theta = (B.D|B.D)$ in view of property 4 of Lemma 2.3,
so that  the contribution is
\begin{eqnarray*}
& &\sum_{A \not= B, C \not= D, B=C, A=D}(B.D|B.D)\left((B.C|B.C)\tr\!\ [U^{B.C}  U^{A.D}] - \tr\!\ U^{B.C} \tr\!\ U^{A.D}\right)\\
&=& -  7\cdot 8\left(\tr\!\ [U^\emptyset]^2 - (\tr\!\ U^\emptyset)^2\right)\,.
\end{eqnarray*}

iv) When $A\not= D, B\not=C$,
\[(B.C|B.C)\tr\!\ [U^{B.C}U^{A.D}]- (\tr\!\ U^{B.C}\tr\!\ U^{A.D})  = - \tr\!\ (U_a)^2\,.\]

With the help of some computer algebra, we get
\[ \sum_{A \not= B, C \not= D, B \not=C, A\not= D} \Theta = 2^3\cdot 7^2\,, \]
and finally, all the contributions in (\ref{sprime2}) give
so that
\begin{eqnarray*}
S'_2 =  - 7\cdot 8 \left(\tr\!\ [U^\emptyset]^2 - (\tr\!\ U^\emptyset)^2\right) - 7^2\cdot 8\!\ \tr\!\ (U_a)^2\,.
\end{eqnarray*}
Going back to (\ref{bigL}) and (\ref{sprime}), we conclude, using again (\ref{L2})
\[\LL(\log P) = - 8 (\tr\!\ U^\emptyset)^2\ = - \frac{1}{8} (\tr\!\ U)^2\,,\]
which ends the proof of the proposition.
\end{proof}

\begin{remark}
Similarly we have
$$
\LL(P(X)) = (8 - \frac{1}{8})\frac{P'(X)^2}{P} - 8P''(X).\\
$$
Chose  $\alpha_1 = -8$, $\alpha_2 = 8 - \frac{1}{8}$, $\alpha_3 = 8$, we still obtain  the multiplicity $a = 8$, while the density of the invariant measure of $L$ is then  $$C \prod_{i < j}|x_{i} - x_{j}|^{2}.$$
\end{remark}

\begin{remark}\label{Hermitian}
Recall that in Bakry and Zani~\cite{BakZ}, section 7.1, for a Hermitian matrix $H = M + iA$ with independent Brownian motions as its entries (where $M$ is symmetric, $A$ is anti-symmetric), we have
\beqnas
\Gamma(M_{ij}, M_{kl}) &=& \frac{1}{2}(\delta_{ik}\delta_{jl} + \delta_{il}\delta_{jk}),\\
\Gamma(A_{ij}, A_{kl}) &=& \frac{1}{2}(\delta_{ik}\delta_{jl} - \delta_{il}\delta_{jk}).\\
\eeqnas

In our model $M = M^{\emptyset}\omega_{\emptyset} + \cA\sum_{C \neq \emptyset}\omega_{C}$, denote $e$ the specific element in the octonion algebra,  $e = \sum_{C \neq \emptyset}\omega_{C}$. Notice that
$$
e^2 = -7,
$$
which indicates that $e$ works like $i$ in the Hermitian matrices, just with a different variance. Therefore,  this example is indeed similar
to  the case of Hermitian matrices.
\end{remark}

\begin{remark}
In this remark we would like to discuss why the dimension 2 is so special. Consider a more general model: let
$$
M = M^{\emptyset}\omega_{0} + \sum_{C \neq \emptyset}M^{C}\omega_{C},
$$
with $M^{C} = x_{C}A_{0}$, where $M^{\emptyset}$ is a Brownian motion on symmetric matrices, $\{x_{C}\}$ a series of Brownian motions on $\mathbb{R}$, and $A_{0}=\{a_{ij}\}$ a fixed anti-symmetric matrix. Obviously this model satisfies the symmetry condition~\ref{symm}. When $M$ is a $2 \times 2$ matrix, it can be considered as a special example of the first case. Let $e = \sum_{C \neq \emptyset}x_{C}\omega_{C}$. Different from the previous model, in this case $e$ can be considered as a Brownian motion on the basis of octonions satisfying $e^2 = -\sum_{C \neq \emptyset}|x_{C}|^2 = -|e|^2$.

Therefore,
\begin{eqnarray*}
\Gamma(M^{\emptyset}_{ij}, M^{\emptyset}_{kl}) &=& \frac{1}{2}(\delta_{ik}\delta_{jl} + \delta_{il}\delta_{jk}),\\
\Gamma(M^{A}_{ij}, M^{B}_{kl}) &=& \delta_{A=B}a_{ij}a_{kl},  A, B \neq \emptyset,\\
\Gamma(M^{\emptyset}_{ij}, M^{A}_{kl}) &=& 0,  A \neq \emptyset.\\
\end{eqnarray*}
Similar computations yield
\begin{eqnarray*}
\Gamma(\log P(X), \log P(Y)) &=& 8\sum_{A.B = \emptyset}{\rm tr}(U(X)^{\emptyset}U(Y)^{\emptyset})+ 8\sum_{A.B \neq \emptyset}({\rm tr}(U(X)^{B.A}A_{0}){\rm tr}(U(Y)^{B.A}A_{0})),\\
L(\log P) &=& - \frac{1}{2}\sum_{A.C \neq \emptyset}{\rm tr}(U(X)^{A.C})^2 -4{\rm tr}(U(X)^{\emptyset})^2 - 4({\rm tr}U(X)^{\emptyset})^2 \\
& & + 5\sum_{B.C \neq \emptyset}{\rm tr}(U(X)^{B.C}A_{0}U(X)^{B.C}A_{0}) + 56{\rm tr}(U(X)^{\emptyset}A_{0}U(X)^{\emptyset}A_{0}),\\
\end{eqnarray*}
which are hard to describe in terms of $P$.
When the matrix is $2 \times 2$, the following equalities hold: for $C \neq \emptyset$,
\begin{eqnarray*}
{\rm tr}(U(X)^{C}A_{0}){\rm tr}(U(Y)^{C}A_{0}) &=& - {\rm tr}(U(X)^{C}U(Y)^{C}),\\
{\rm tr}(U(X)^{C}A_{0}U(X)^{C}A_{0}) &=& -{\rm tr}(U(X)^{C})^2,\\
{\rm tr}(U(X)^{\emptyset}A_{0}U(X)^{\emptyset}A_{0}) &=& {\rm tr}(U^{\emptyset})^2 - ({\rm tr}U^{\emptyset})^2.\\
\end{eqnarray*}
which give rise to the results in the first model.

However, in higher dimensions, the above conditions are hard to satisfy.
In fact when $n = 2$, it is enough to take $A_{0} = \left(
           \begin{array}{cc}
             0 & -1\\
             1 & 0 \\
           \end{array}
         \right)$. Set $x_{\{1\}} = z$. By the formula
\begin{eqnarray*}
U^{\emptyset} &=& (M^{\emptyset} + z^{2}A_{0}(M^{\emptyset}){-1}A_{0})^{-1},\\
U^{\{1\}} &=& -zU^{\emptyset}A_{0}(M^{\emptyset}){-1}.\\
\end{eqnarray*}
It is easily seen that $U^{\{1\}}$ is a $2 \times 2$ antisymmetric matrix and can be written as
$$
U^{\{1\}} = \lambda A_{0},
$$
where $\lambda = \frac{z}{z^2-\det(M^{\emptyset})}$. Compare it with the expressions of $U^{\emptyset}$ and $U^{\{1\}}$, we have $A_{0} = \frac{\lambda}{\lambda z^2 - z}M^{\emptyset}A_{0}M^{\emptyset}$. Since $A_{0}^2 = -I$, this leads to $(M^{\emptyset}A_{0})^2 = \frac{z-\lambda z^2}{\lambda}I = -\det(M^{\emptyset}) I$, which is impossible to hold for any symmetric matrix $M^{\emptyset}$ in higher dimensions.  This restriction insures  the first two conditions, and the third one is proved by this and the fact that ${\rm tr}(M^2) - ({\rm tr}M)^2 = -2\det(M)$ holds in dimension 2.
\end{remark}

\section{Some remarks}
Our two models provide examples where the multiplicity of eigenvalues and the exponent $\beta$ in the law are not related, which is in accordance with the conclusion in Bakry and Zani~\cite{BakZ}, that the exponent reflects the structure of the algebra while the multiplicity of the eigenvalues is decided by the dimension of the eigenspaces.

As we have seen, the octonionic structure of the matrix plays an important role. For higher dimension, the problem may be studied by our method if we know the structure of the inverse matrix, which is not necessarily octonionic. The main obstacle is still the non-associativity, which prevents any  matrix presentation for octonionic multiplication. Let us recall that the $3 \times 3$ matrices on octonions have been studied by Dray and Manogue~\cite{drayM} and Okubo~\cite{oku} using algebraic method, showing that there are 6 eigenvalues with multiplicity 4.  It it still an open problem to provide a probabilistic model in this case which would lead to  this conclusion.

\section{Acknowledgement}

I would like to thank my supervisor, Prof. Dominique Bakry, for his precious suggestions and helpful discussions. I am also indebted to my other supervisors, Prof. Jiangang Ying and Prof. Xiangdong Li, for their support and help. I would also like to express my gratitude to the referee for careful reading and suggestions. 

This research was supported by China Scholarship Council.

\bibliographystyle{amsplain}
\bibliography{ref_oct}

\end{document}